\theoremstyle{plain}
\newtheorem{theorem}{Theorem}
\newtheorem{lemma}[theorem]{Lemma}
\newtheorem*{theorem3'}{Theorem 3$'$}
\newtheorem*{theorem3''}{Theorem 3$''$}
\theoremstyle{definition}
\renewcommand\@biblabel[1]{}
\title{How strong can the Parrondo effect be?}
\author{S. N. Ethier\thanks{Department of Mathematics, University of Utah, 155 S. 1400 E., Salt Lake City, UT 84112, USA. e-mail: ethier@math.utah.edu.  Partially supported by a grant from the Simons Foundation (429675).}\; and Jiyeon Lee\thanks{Department of Statistics, Yeungnam University, 280 Daehak-Ro, Gyeongsan, Gyeongbuk 38541, South Korea.  e-mail: leejy@yu.ac.kr. Supported by the Basic Science Research Program through the National Research Foundation of Korea (NRF) funded by the Ministry of Education (NRF-2018R1D1A1B07042307).}}
\date{}
\begin{document}
\maketitle

\begin{abstract}
If the parameters of the original Parrondo games $A$ and $B$ are allowed to be arbitrary, subject to a fairness constraint, and if the two (fair) games $A$ and $B$ are played in an arbitrary periodic sequence, then the rate of profit can not only be positive, it can be arbitrarily close to 1 (i.e., 100\%).  
\end{abstract}

\section{Introduction}\label{intro}

The Parrondo effect appears when two fair coin-tossing games, $A$ and $B$, played in a random sequence or in some periodic sequence such as $ABB\,ABB\,ABB\,\cdots$, form a winning game.  Let us define a $p$-coin to be a coin with probability $p$ of heads.  In the original capital-dependent games of Parrondo (Harmer and Abbott, 1999), game $A$ uses a fair coin, while game $B$ uses two biased coins, a $p_0$-coin if capital is congruent to 0 (mod 3) and a $p_1$-coin otherwise, where
\begin{equation}\label{p0,p1}
p_0=\frac{1}{10}\quad\text{and}\quad p_1=\frac34.
\end{equation}
(These coins can be physically realized with dice; see Figure~\ref{P-dice}.)  The player wins one unit with heads and loses one unit with tails.  Both games are fair, but the random mixture, denoted by $\frac12 A+\frac12 B$ and interpreted as the game in which the toss of a fair coin determines whether game $A$ or game $B$ is played, has long-term cumulative profit per game played (hereafter, rate of profit) 
\begin{equation*}
\mu\big({\textstyle\frac12}A+{\textstyle\frac12}B\big)=\frac{18}{709}\approx0.0253879,
\end{equation*}
and the pattern $ABB$ has rate of profit
\begin{equation}\label{ABB}
\mu(ABB)=\frac{2416}{35601}\approx0.0678633.
\end{equation}
Dinis (2008) found that the pattern $ABABB$ has the highest rate of profit, namely
\begin{equation}\label{ABABB}
\mu(ABABB)=\frac{3613392}{47747645}\approx0.0756769.
\end{equation}

\begin{figure}[t]
\centering
\includegraphics[width=4in]{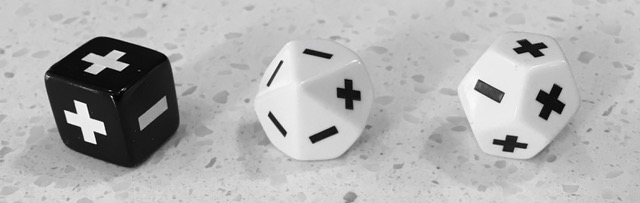}
\caption{\label{P-dice}Parrondo dice. Game $A$ uses the black die (3 $+$, 3 $-$), while game $B$ uses the white dice, namely (1 $+$, 9 $-$) when capital is congruent to 0 (mod~3) and (9 $+$, 3 $-$) otherwise.  The player wins one unit with a plus sign and loses one unit with a minus sign.  Both games are fair, but when alternated, either randomly or periodically, a winning game ensues.  (One exception: The pattern $AB$, that is, the periodic sequence $AB\,AB\,AB\,\cdots$, remains fair.)} 
\end{figure}

These rates of profit are rather modest.  Can we modify the games to make the rates of profit more substantial?  To put it more precisely, how large can the rate of profit be if we vary the parameters of the games, subject to a fairness constraint?  We will focus on periodic sequences, where the rates of profit tend to be larger than with random sequences.

Game $A$ is always the same fair-coin-tossing game.  With $r\ge3$ an integer, game $B$ is a mod $r$ capital-dependent game that uses two biased coins, a $p_0$-coin ($p_0<1/2$) if capital is congruent to 0 (mod $r$), and a $p_1$-coin ($p_1>1/2$) otherwise.  The probabilities $p_0$ and $p_1$ must be such that game $B$ is fair, which requires the constraint
$$
(1-p_0)(1-p_1)^{r-1}=p_0 p_1^{r-1},
$$
or equivalently,
\begin{equation}\label{rho-param}
p_0=\frac{\rho^{r-1}}{1+\rho^{r-1}}\quad\text{and}\quad p_1=\frac{1}{1+\rho}
\end{equation}
for some $\rho\in(0,1)$.  The special case of $r=3$ and $\rho=1/3$ gives \eqref{p0,p1}.  The games are played in some pattern $\Gamma(A,B)$, repeated ad infinitum.  We denote the rate of profit by $\mu(r,\rho,\Gamma(A,B))$, so that the rates of profit in \eqref{ABB} and \eqref{ABABB} in this notation become $\mu(3,1/3,ABB)$ and $\mu(3,1/3,ABABB)$.  

How large can $\mu(r,\rho,\Gamma(A,B))$ be?  The answer, perhaps surprisingly, is that it can be arbitrarily close to 1 (i.e., 100\%).

\begin{theorem}\label{sup=1}
\begin{equation*}
\sup_{r\ge3,\;\rho\in(0,1),\;\Gamma(A,B)\text{ arbitrary}}\mu(r,\rho,\Gamma(A,B))=1.
\end{equation*}
\end{theorem}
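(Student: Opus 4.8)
The bound $\mu \le 1$ is immediate, since each game changes the capital by $\pm 1$, so the per-game profit never exceeds $1$. Everything is in the matching lower bound, and the plan is to produce games and patterns with $\mu \to 1$ by driving $\rho \to 0$. By \eqref{rho-param} this sends $p_0 \to 0$ and $p_1 \to 1$, so that game $B$ becomes deterministic on the residues mod $r$: from any residue $\not\equiv 0$ it wins (capital $+1$), and from residue $0$ it loses (capital $\to r-1$). Consequently a block of $B$'s begun at residue $1$ wins $r-1$ times in a row as the capital climbs $1 \to 2 \to \cdots \to r-1 \to 0$, and the sole obstruction to winning every game is the forced loss when $B$ is played at $0$. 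Only game $A$, an unbiased $\pm 1$ step, can move the capital upward across $0$ (from $0$ to $1$), and it succeeds with probability just $\tfrac12$.

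A single $A$ in front of the ride, i.e.\ the pattern $AB^{r-1}$, caps the rate at $\tfrac12$: half the time the $A$ steps the wrong way and the entire ride is squandered oscillating near $0$. To beat this I would supply retries via
\begin{equation*}
\Gamma_j = (AB)^{j-1}\, A\, B^{r-1},
\end{equation*}
of length $r + 2j - 2$, and analyze it as $\rho \to 0$. The prefix $(AB)^{j-1}A$ is $j$ independent attempts to cross $0$: each $A$ played at residue $0$ reaches $1$ with probability $\tfrac12$ (success) or $r-1$ otherwise (failure), and the interleaved $B$'s are exactly what keeps the gadget clean — after a failure the next $B$ wins and returns the capital to $0$ to retry, while after a success the next $B$ lifts it from $1$ to $2$. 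Thus every block begins at residue $0$ or at a residue $\ge 2$, never at the delicate residue $1$, and from residues $\ge 2$ the blocks only drift upward. All $j$ attempts fail with probability $2^{-j}$; otherwise the capital is safely above $0$ before the terminal ride $B^{r-1}$, which then contributes about $r-1$ consecutive wins. Either way the period ends with the capital near $0$, so the chain on period-start residues concentrates there as $\rho \to 0$.

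Granting that concentration, the stationary profit per period is, to leading order in $r$, about $(1 - 2^{-j})\,r$ (a productive ride of profit $\approx r$ with probability $1 - 2^{-j}$, and a wasted ride of profit $\approx 0$ otherwise), whence
\begin{equation*}
\mu(r,\rho,\Gamma_j) \approx (1 - 2^{-j})\,\frac{r}{\,r + 2j - 2\,} \qquad (\rho \to 0,\ r \text{ large}).
\end{equation*}
Sending $r \to \infty$ makes the rate approach $1 - 2^{-j}$, and then $j \to \infty$ pushes it to $1$; with $\mu \le 1$ this proves the theorem.

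The hard part is the rigor of the $\rho \to 0$ passage, since at $\rho = 0$ game $B$ is degenerate and cannot be evaluated directly. I would instead work at small $\rho > 0$ and show that the exact rate — most conveniently written through the stationary distribution of the mod-$r$ Markov chain driven by one period of $\Gamma_j$ — converges to the deterministic computation. This means (i) proving the stationary mass collapses to a neighborhood of residue $0$ and controlling the $O(1)$ fluctuation of the start residue, and (ii) estimating the rare events the limiting picture ignores, chiefly the $\approx \rho r$ expected $B$-losses during the ride (each merely nudging the climb) and the exponentially small chance $p_0$ that $B$ wins at $0$. Because the coin probabilities are rational in $\rho$, the rate of profit is a rational function of $\rho$ on $(0,1)$, so the remaining task is to confirm its $\rho \to 0$ limit equals the heuristic and to arrange the three limits so that $\mu > 1 - \varepsilon$ for every $\varepsilon > 0$.
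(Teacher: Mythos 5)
Your plan is, in outline, the paper's own: your pattern is the paper's pattern in disguise, since $(AB)^{j-1}\,A\,B^{r-1}=(AB)^jB^{r-2}$, i.e., $\Gamma_{r,s}$ of \eqref{Gamma_{r,s}} with $s=j$, and your trivial bound $\mu\le1$ plus the iterated limits ($\rho\to0$, then $r\to\infty$, then $j\to\infty$) is exactly how the paper deduces Theorem~\ref{sup=1} from Theorem~\ref{rate}. But the step you explicitly defer --- ``the hard part is the rigor of the $\rho\to0$ passage'' --- \emph{is} the entire content of the paper's proof, and your proposal leaves it genuinely open: rationality of $\rho\mapsto\mu(r,\rho,\Gamma_j)$ on $(0,1)$ does not by itself rule out a pole at $\rho=0$ or a limit different from your heuristic. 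Moreover, your premise that at $\rho=0$ game $B$ ``is degenerate and cannot be evaluated directly'' is the opposite of what the paper does. The paper extends its SLLN to cover the degenerate case: for odd $r$, at $\rho=0$ the product $\bm P=(\bm P_A\bm P_B)^s(\bm P_B)^{r-2}$ is reducible with a single aperiodic recurrent class $\{0,r-1\}$, handled by Theorem~$3'$; for even $r$, with two recurrent classes, by Theorem~\ref{SLLN-2}. This one device closes both of your gaps at once: it yields the exact closed form \eqref{formula1}, $\mu(r,0,(AB)^sB^{r-2})=\frac{r}{2s+r-2}\,\frac{2^s-1}{2^s+1}$, in place of your concentration heuristic, and --- because the stationary distribution exists and is unique for \emph{all} $0\le\rho\le1$ --- it shows $f(\rho)=\mu(r,\rho,(AB)^sB^{r-2})$ is real analytic on the closed interval, hence continuous at $0$, with no separate rare-event or perturbation estimates needed.

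Two inaccuracies in your heuristic confirm that the stationary analysis cannot be skipped. First, the period-start residue does not concentrate at $0$: in stationarity it is $0$ with probability $2^s/(2^s+1)$ and $r-1$ with probability $1/(2^s+1)$ (a fully failed period ends at capital $-1$), so the stationary profit per period is $(2^s-1)r/(2^s+1)$, giving the factor $\frac{2^s-1}{2^s+1}$ rather than your $1-2^{-j}$; this is harmless for the final limit ($r\to\infty$, then $j\to\infty$) but it is not what your computation asserts, and only the exact stationary split reveals it. Second, for even $r$ the picture changes qualitatively --- at $\rho=0$ there are two absorbing states, the dynamics depend on the parity of the capital, and the rate is $0$ for odd initial capital, cf.\ \eqref{formula2} --- so your residue-climbing description is only valid as stated for odd $r$. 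Since you are proving a supremum you may simply restrict to odd $r$, but the argument must say so.
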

The proof is deferred to Section~\ref{rate section}. 

We can compute $\mu(r,\rho,\Gamma(A,B))$ for $r\ge3$ (the modulo number in game $B$) and pattern $\Gamma(A,B)$ as a function of $\rho$ (the parameter in \eqref{rho-param}).  Indeed, the method of Ethier and Lee (2009) applies if $r$ is odd, and generalizations of it apply if $r$ is even; see Section~\ref{SLLN section} for details.  For example,
\begin{equation}\label{mu(ABB) formula}
\mu(3,\rho,ABB)=\frac{(1 - \rho)^3 (1 + \rho) (1 + 2 \rho + \rho^2 + 2 \rho^3 + \rho^4)}{3 + 12 \rho + 
 20 \rho^2 + 28 \rho^3 + 36 \rho^4 + 28 \rho^5 + 20 \rho^6 + 12 \rho^7 + 3 \rho^8}.
\end{equation}
This and other examples suggest that typically $\mu(r,\rho,\Gamma(A,B))$ is decreasing in $\rho$, hence maximized at $\rho=0$.  (There are exceptions, which include, when $r\ge3$ is odd, $AB^s$ with $s\ge3$ odd.)  We excluded the case $\rho=0$ in \eqref{rho-param}, but now we want to include it.  We find that
\begin{equation}\label{3,0,ABB}
\mu(3,0,ABB)=\frac13
\end{equation}
(by \eqref{mu(ABB) formula}) and
\begin{equation}\label{3,0,ABABB}
\mu(3,0,ABABB)=\frac{9}{25}.
\end{equation}
Thus, we take $\rho=0$ in what follows.  

For a given $r\ge3$, we expect that we can maximize the rate of profit $\mu(r,0,\Gamma(A,B))$ with a pattern of the form
\begin{equation}\label{Gamma_{r,s}}
\Gamma(A,B)=(AB)^s B^{r-2}
\end{equation}
for some positive integer $s$.  Notice that this is $ABB$ if $(r,s)=(3,1)$ and $ABABB$ if $(r,s)=(3,2)$.  

Let us explain the intuition behind \eqref{Gamma_{r,s}}.  Only the $s$ plays of game $A$ are random.  Game $B$ is deterministic and very simple: If capital is congruent to 0 (mod $r$), we lose one unit, otherwise we win one unit.  Notice that cumulative profit remains bounded by $r$ when game $B$ is played repeatedly, hence cumulative profit per game played tends to 0 as the number of games played tends to infinity, and game $B$ is (asymptotically) fair.  

Clearly, the optimal strategy, if it were legal, would be to play game $A$ when capital is congruent to 0 (mod $r$) and to play game $B$ otherwise.  With initial capital congruent to 0 (mod $r$), this strategy could be described as playing the pattern $(AB)^S B^{r-2}$, where $S$ is the geometric random variable equal to the number of plays of game $A$ needed to achieve a win at that game.  Of course, random patterns are not ordinarily considered, so \eqref{Gamma_{r,s}} seems a reasonable nonrandom approximation for some positive integer $s$.

First, assume that $r$ is odd and initial capital is congruent to 0 (mod $r$).  If all $s$ plays of game $A$ result in losses, cumulative profit is $-1$ after one play of \eqref{Gamma_{r,s}}; otherwise it is $r$.  If initial capital is congruent to $r-1$ (mod $r$), then after one play of \eqref{Gamma_{r,s}}, cumulative profit is 1 with probability 1.   

Second, assume that $r$ is even and again initial capital is congruent to 0 (mod $r$).  If the number of wins in the $s$ plays of game $A$ is 0, cumulative profit is $0$ after one play of \eqref{Gamma_{r,s}}; if the number of wins is between 1 and $r/2$, inclusive, cumulative profit is $r$; if the number of wins is between $r/2+1$ and $r$, inclusive, cumulative profit is $2r$; if the number of wins is between $r+1$ and $3r/2$, inclusive, cumulative profit is $3r$; and so on.  If initial capital is congruent to $r-1$ (mod $r$), then after one play of \eqref{Gamma_{r,s}}, cumulative profit is 0 with probability 1.  

The probabilistic structure of capital growth after multiple plays of \eqref{Gamma_{r,s}} can be analyzed precisely from these observations, and we can evaluate the exact rate of profit.

\begin{theorem}\label{rate}
Let $r\ge3$ be an odd integer and $s$ be a positive integer.  Then
\begin{equation}\label{formula1}
\mu(r,0,(AB)^sB^{r-2})=\frac{r}{2s+r-2}\;\frac{2^s-1}{2^s+1},
\end{equation}
regardless of initial capital.

Let $r\ge4$ be an even integer and $s$ be a positive integer.  Then
\begin{equation}\label{formula2}
\mu(r,0,(AB)^sB^{r-2})=\begin{cases}\cfrac{r}{2s+r-2}\;\displaystyle{\sum_{k=0}^s\bigg\lceil\frac{2k}{r}\bigg\rceil\binom{s}{k}\frac{1}{2^s}}&\text{if initial capital is even},\\ \noalign{\medskip}
0&\text{if initial capital is odd}.\end{cases}
\end{equation}
\end{theorem}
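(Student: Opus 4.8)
The plan is to exploit the degeneracy at $\rho=0$, where game $B$ becomes deterministic (a loss at capital $\equiv 0\pmod r$, a win otherwise), so that the only randomness in one period of \eqref{Gamma_{r,s}} comes from the $s$ tosses of game $A$. Since each period consists of exactly $N:=2s+r-2$ games, I would run a renewal--reward argument: track the capital class modulo $r$ at the \emph{start} of each period, obtaining a finite-state Markov chain, and invoke the strong law of Section~\ref{SLLN section} to conclude that
\begin{equation*}
\mu(r,0,(AB)^sB^{r-2})=\frac{\text{(stationary mean profit per period)}}{N}.
\end{equation*}
Because the capital moves by at most one unit per game, the profit at an arbitrary game differs from its value at the preceding period boundary by at most $N$, so the per-game rate and the per-period rate agree in the limit. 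The four observations recorded just before the theorem supply precisely the one-period transition data needed.

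For odd $r$ I would use the first two observations. With the cycle-boundary state taken to be the capital modulo $r$, they say that from state $0$ the period yields profit $-1$ (probability $2^{-s}$, all $A$-tosses losing) landing in state $r-1$, or profit $r$ (probability $1-2^{-s}$) landing in state $0$; and that from state $r-1$ the period yields profit $+1$ with probability $1$, landing in state $0$. This is an irreducible, aperiodic two-state chain on $\{0,r-1\}$ (the self-loop at $0$, of probability $1-2^{-s}>0$, gives aperiodicity), with stationary distribution
\begin{equation*}
\pi_0=\frac{2^s}{2^s+1},\qquad \pi_{r-1}=\frac{1}{2^s+1}.
\end{equation*}
The stationary mean profit per period is
\begin{equation*}
\pi_0\big[(1-2^{-s})r-2^{-s}\big]+\pi_{r-1}\cdot 1=\frac{r(1-2^{-s})}{1+2^{-s}}=r\,\frac{2^s-1}{2^s+1},
\end{equation*}
and dividing by $N=2s+r-2$ yields \eqref{formula1}. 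To obtain ``regardless of initial capital'' I would check that each residue in $\{1,\dots,r-2\}$ is transient, hence left after finitely many periods, which does not affect the limiting rate.

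For even $r$ the structural fact is that $N=2s+r-2$ is even, so one period preserves the parity of the capital; even and odd initial capitals therefore evolve in disjoint residue classes. On the even side the recurrent state is $\equiv 0\pmod r$: the third observation says that if game $A$ is won $k$ times then the period profit is $r\lceil 2k/r\rceil$, a multiple of $r$, so the chain returns to state $0$ and successive periods are i.i.d. With $k\sim\mathrm{Binomial}(s,\tfrac12)$ the mean profit per period is $r\sum_{k=0}^s\lceil 2k/r\rceil\binom{s}{k}2^{-s}$, and dividing by $N$ gives the first line of \eqref{formula2}; the transient even residues again reach state $0$ and leave the rate unchanged. On the odd side the recurrent state is $\equiv r-1\pmod r$, where the fourth observation gives profit $0$ with probability $1$, so the stationary mean profit per period is $0$ and the rate is $0$, the second line of \eqref{formula2}.

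The main obstacle is not the Markov/renewal bookkeeping but the rigorous verification of the four one-period observations, above all the step function $k\mapsto r\lceil 2k/r\rceil$ in the even case. Establishing it requires tracking, within a single period, how the deterministic ``ratchet'' of game $B$ (which climbs to the next multiple of $r$ and then oscillates between $mr$ and $mr-1$) interacts with the $\pm1$ displacements contributed by the wins and losses of game $A$, and confirming that each successive group of $r/2$ wins of game $A$ advances the terminal capital by exactly one further multiple of $r$. I would prove this by induction on $s$ (and on the index of the first winning $A$-toss), and would likewise confirm that the non-recurrent residues are genuinely transient, so that the ``regardless of initial capital'' and parity-splitting claims are justified.
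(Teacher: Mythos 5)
Your proposal is correct, and it takes a genuinely different (and leaner) route than either of the paper's two proofs, at least for the odd case. The paper's first proof evaluates the per-game decomposition \eqref{mean-formula} term by term, computing entries $\lambda_{0,0}$, $\lambda_{r-1,0}$ of each partial-product matrix and then collapsing the result with nontrivial binomial/negative-binomial identities; its second proof identifies the exact distribution of $S_{nt}$ with the lattice-path variable $Z_n$ of Section~\ref{mean section} and applies Lemmas~\ref{even-lemma} and \ref{mean-lemma}. You instead run renewal--reward at period boundaries: for odd $r$ you use the same two-state recurrent chain on $\{0,r-1\}$ and the same stationary distribution $\pi_0=2^s/(2^s+1)$ that appear in the paper's first proof, but you bypass all the binomial bookkeeping by weighting the \emph{total} one-period profits ($-1$ or $r$ from state $0$, $+1$ from state $r-1$) rather than per-game payoffs; this is legitimate because summing the terms of \eqref{mean-formula} over the period gives exactly $t^{-1}E_{\bm\pi}[S_t]$, so Theorem~$3'$ directly equates the a.s.\ rate with your stationary mean reward per period divided by $N$ (your sandwich bound handles non-boundary times). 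Your even case, with i.i.d.\ period profits $r\lceil 2K/r\rceil$, $K\sim\mathrm{Binomial}(s,1/2)$, essentially coincides with the paper's second proof, which uses the same one-period mean and Theorem~\ref{SLLN-2} (with $R_1=\{0\}$, $R_2=\{r-1\}$ and $\alpha\in\{0,1\}$ by parity). What each approach buys: the paper's second proof yields the full distribution of $S_{nt}$, not just its mean, while yours gets the theorem with minimal machinery --- no Lemmas~\ref{even-lemma} or \ref{mean-lemma} and no combinatorial identities. The burden you correctly flag, proving the four one-period observations, is one the paper also carries largely by assertion (they are stated in Section~\ref{intro} and justified only briefly); note also that the transience verification is easier than your induction plan suggests, since at $\rho=0$ game $B$ is deterministic and the trailing block $B^{r-2}$ drives every residue into $\{0,r-1\}$ within a single period, so the boundary chain is confined to the recurrent (or absorbing) states from the first period boundary onward.
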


The formula in \eqref{formula1} is consistent with \eqref{3,0,ABB} and \eqref{3,0,ABABB}.  The sum in \eqref{formula2} is equal to $(2^s-1)/2^s$ if $s\le r/2$ and bounded below by $(2^s-1)/2^s$ in general.  Theorem~\ref{rate} implies Theorem~\ref{sup=1}, as we will confirm later.  The proof of Theorem~\ref{rate} is deferred to Section~\ref{rate section}.  Table~\ref{rates} illustrates \eqref{formula1} with several examples.  

\begin{table}[htb]
\caption{\label{rates}The rate of profit $\mu(r,0,(AB)^s B^{r-2})$.  Here, for a given odd $r$, we choose $s$ to maximize $s'\mapsto\mu(r,0,(AB)^{s'} B^{r-2})$.  Results are rounded to six significant digits.}
\catcode`@=\active \def@{\hphantom{0}}
\catcode`#=\active \def#{\hphantom{$\,^1$}}
\tabcolsep=.2cm
\begin{center}
\begin{tabular}{ccrcccc}
$r$ & $s$ & $\mu(r,0,(AB)^s B^{r-2})$ &@@@& @@$r$ & @$s$ & $\mu(r,0,(AB)^s B^{r-2})$ \\
\noalign{\smallskip}
\cline{1-3}\cline{5-7}
\noalign{\smallskip}
3 & 2 &  $9/25=0.360000$@ && @@25 & @5 & 0.711662 \\
5 & 3 &  $35/81\approx0.432099$@ && @125 & @7 & 0.898263 \\
7 & 3 &  $49/99\approx0.494949$@ && @625 & @9 & 0.971238 \\
9 & 3 &   $7/13\approx0.538462$@ && 3125 & 11 & 0.992671 \\
\noalign{\smallskip}
\cline{1-3}\cline{5-7}
\end{tabular}
\end{center}
\end{table}

We do not consider random mixtures $\gamma A+(1-\gamma)B$ of games $A$ and $B$.  Although we expect that the rate of profit, which we denote by $\mu(r,\rho,\gamma A+(1-\gamma)B)$, can be made arbitrarily close to 1 by suitable choice of the modulo number $r$ in game $B$, the parameter $\rho$ in \eqref{rho-param}, and the probability $\gamma$ with which game $A$ is played, we cannot prove it.  However, see Table~\ref{rates-mixture} for several examples.

\begin{table}[htb]
\caption{\label{rates-mixture}The rate of profit $\mu(r,0,\gamma A+(1-\gamma)B)$.  Here, for a given odd $r$, we choose $\gamma$ to maximize $\gamma'\mapsto\mu(r,0,\gamma' A+(1-\gamma')B)$. Results are rounded to six significant digits.}
\catcode`@=\active \def@{\hphantom{0}}
\catcode`#=\active \def#{\hphantom{$\,^1$}}
\tabcolsep=.2cm
\begin{center}
\begin{tabular}{cccccccc}
    &          & $\mu(r,0,\gamma A$ &@@@&      &          & $\mu(r,0,\gamma A$ \\
$r$ & $\gamma$ & ${}+(1-\gamma)B)$    &@@@& @$r$ & $\gamma$ & ${}+(1-\gamma)B)$    \\
\noalign{\smallskip}
\cline{1-3}\cline{5-7}
\noalign{\smallskip}
3 & 0.407641 & 0.133369 && @@25 & 0.277926@ & 0.482769 \\
5 & 0.420756 & 0.229111 && @125 & 0.150722@ & 0.709914 \\
7 & 0.399201 & 0.279864 && @625 & 0.0739646 & 0.854806 \\
9 & 0.376138 & 0.318393 && 3125 & 0.0345306 & 0.931535 \\
\noalign{\smallskip}
\cline{1-3}\cline{5-7}
\end{tabular}
\end{center}
\end{table}

\section{SLLN for periodic sequences of games}\label{SLLN section}

Ethier and Lee (2009) proved a strong law of large numbers and a central limit theorem for periodic sequences of Parrondo games of the form $A^r B^s$, repeated ad infinitum, where $r$ and $s$ are positive integers.  Below we state a generalization of the SLLN to arbitrary patterns.  Later we will weaken the hypotheses as needed.

First, it should be mentioned that several other authors have studied periodic sequences of Parrondo games.  Pyke (2003) discussed one example, $AABB$, which he regarded as the alternation of $AA$ and $BB$.  His method is sound but his stated ``asymptotic average gain'' for that example is inaccurate, and the source of the error is unknown.  Kay and Johnson (2003) studied patterns of the form $A^r B^s$ in the context of history-dependent Parrondo games, and gave an expression for the rate of profit that is consistent with \eqref{mean-formula} below.  Key, K\l osek, and Abbott (2006), as well as R\'emillard and Vaillancourt (2019), took a different approach, analyzing periodic sequences of Parrondo games in terms of transience to $\pm\infty$ and recurrence instead of in terms of the rate of profit.

\begin{theorem}\label{SLLN}
Let $\bm P_A$ and $\bm P_B$ be transition matrices for Markov chains in a finite state space $\Sigma$.  Let $C_1C_2\cdots C_t$, where each $C_i$ is $A$ or $B$, be a pattern of $A$s and $B$s of length $t$.  Assume that $\bm P:=\bm P_{C_1}\bm P_{C_2}\cdots\bm P_{C_t}$ is irreducible and aperiodic, and let the row vector $\bm\pi$ be the unique stationary distribution of $\bm P$.  Given a real-valued function $w$ on $\Sigma\times\Sigma$,  define the payoff matrix $\bm W:=(w(i,j))_{i,j\in\Sigma}$.  Define $\dot{\bm P}_A:=\bm P_A\circ\bm W$ and $\dot{\bm P}_B:=\bm P_B\circ\bm W$, where $\circ$ denotes the Hadamard $($entrywise$)$ product, and put
\begin{equation}\label{mean-formula}
\mu:=t^{-1}\bm\pi(\dot{\bm P}_{C_1}+\bm P_{C_1}\dot{\bm P}_{C_2}+\cdots+\bm P_{C_1}\bm P_{C_2}\cdots\bm P_{C_{t-1}}\dot{\bm P}_{C_t})\bm1,
\end{equation}
where $\bm1$ denotes a column vector of $1$s with entries indexed by $\Sigma$.  Let $\{X_n\}_{n\ge0}$ be a nonhomogeneous Markov chain in $\Sigma$ with transition matrices $\bm P_{C_1}$, $\bm P_{C_2}$, \dots, $\bm P_{C_t}$, $\bm P_{C_1}$, $\bm P_{C_2}$, \dots, $\bm P_{C_t}$, $\bm P_{C_1}$, and so on, and let the initial distribution be arbitrary.  For each $n\ge1$, define $\xi_n:=w(X_{n-1},X_n)$ and $S_n:=\xi_1+\cdots+\xi_n$.  Then $\lim_{n\to\infty}n^{-1}S_n=\mu$ a.s.
\end{theorem}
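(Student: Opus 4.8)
The plan is to reduce the nonhomogeneous but periodic chain $\{X_n\}$ to a homogeneous, ergodic Markov chain observed at multiples of the period $t$, and then invoke the classical strong law of large numbers for additive functionals of a finite-state ergodic chain. The first step is to observe that the sampled process $\{X_{kt}\}_{k\ge0}$ is a time-homogeneous Markov chain with one-step transition matrix $\bm P=\bm P_{C_1}\bm P_{C_2}\cdots\bm P_{C_t}$. Since $\bm P$ is irreducible on the finite state space $\Sigma$, this sampled chain is positive recurrent with unique stationary distribution $\bm\pi$, so the ergodic theorem applies to it regardless of the (arbitrary) initial distribution.

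Next I would group the increments into blocks aligned with the period. For $k\ge1$ set
\begin{equation*}
Y_k:=\xi_{(k-1)t+1}+\cdots+\xi_{kt}=\sum_{\ell=1}^{t}w(X_{(k-1)t+\ell-1},X_{(k-1)t+\ell}),
\end{equation*}
the total payoff accrued over the $k$-th repetition of the pattern. To give the $Y_k$ a clean Markov structure I would pass to the ``snake chain'' $\tilde X_k:=(X_{(k-1)t},X_{(k-1)t+1},\dots,X_{kt})$, a block path recording $t$ transitions; the sequence $\{\tilde X_k\}$ is a homogeneous Markov chain whose first-coordinate marginal reproduces the sampled chain above, hence it too is ergodic, and $Y_k=F(\tilde X_k)$ for the bounded function $F$ that sums the payoffs along a path. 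The SLLN for ergodic finite Markov chains then yields $m^{-1}\sum_{k=1}^m Y_k\to \tilde E[F]$ a.s., where the expectation is taken under the stationary law of the snake chain.

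The remaining algebra is the identification $\tilde E[F]=t\mu$. Conditioning on the block's starting state gives $E[Y_k\mid X_{(k-1)t}=i]=g(i)$, where a one-term-at-a-time computation yields $E[w(X_{\ell-1},X_\ell)\mid X_0=i]=(\bm P_{C_1}\cdots\bm P_{C_{\ell-1}}\dot{\bm P}_{C_\ell}\bm1)_i$ for $\ell=1,\dots,t$ (with the empty product read as the identity). Summing over $\ell$ gives $\bm g=(\dot{\bm P}_{C_1}+\bm P_{C_1}\dot{\bm P}_{C_2}+\cdots+\bm P_{C_1}\cdots\bm P_{C_{t-1}}\dot{\bm P}_{C_t})\bm1$, and since the starting states are distributed according to $\bm\pi$ in steady state, $\tilde E[F]=\bm\pi\bm g=t\mu$ by \eqref{mean-formula}.

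Finally I would upgrade the block average to the full average. Writing $n=mt+j$ with $0\le j<t$, we have $S_n=\sum_{k=1}^m Y_k+R_n$, where the remainder $R_n$ is a sum of at most $t$ terms; because $\Sigma$ is finite, $w$ is bounded, so $|R_n|$ is bounded and $n^{-1}R_n\to0$. Combining this with $m/n\to1/t$ gives $n^{-1}S_n\to\mu$ a.s. The main obstacle is the middle step: the per-period payoff $Y_k$ depends on the intermediate states and is \emph{not} a function of $X_{(k-1)t}$ alone, so one must either lift to the snake chain as above or decompose $Y_k=g(X_{(k-1)t})+(Y_k-g(X_{(k-1)t}))$ and argue separately that the averaged fluctuations vanish. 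The snake-chain formulation is cleanest, keeping everything within the standard Markov ergodic theorem. (Irreducibility of $\bm P$ alone suffices for the SLLN; aperiodicity is assumed for consistency and for the accompanying limit theorems.)
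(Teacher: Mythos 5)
Your proposal is correct and follows essentially the same route as the paper: the paper's proof defers to Theorem~6 of Ethier and Lee (2009), which rests on precisely your block (``snake'') chain \eqref{statMC2}, and the paper's only added content is the verification that irreducibility and aperiodicity of $\bm P$ alone suffice to make that chain ergodic. The one place your write-up is quicker than warranted is the claim that ergodicity of the snake chain follows because its first-coordinate marginal is the sampled chain; the clean justification is the paper's identity \eqref{Q^n}, $\bm Q^n((x_0,\ldots,x_t),(y_0,\ldots,y_t))=\bm P^{n-1}(x_t,y_0)\bm P_{C_1}(y_0,y_1)\cdots\bm P_{C_t}(y_{t-1},y_t)$, which shows $\bm Q^n$ has all entries positive whenever $\bm P^{n-1}$ does.
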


\begin{proof}
The proof is identical to the proof of Theorem~6 of Ethier and Lee (2009).  However, here we have assumed fewer hypotheses and should explain why.  First, it is unnecessary to assume that $\bm P_A$ and $\bm P_B$ are irreducible and aperiodic because that assumption is not needed.  It is also unnecessary to assume that all cyclic permutations of $\bm P:=\bm P_{C_1}\bm P_{C_2}\cdots\bm P_{C_t}$ are irreducible and aperiodic because that assumption is redundant; it suffices that $\bm P$ itself be irreducible and aperiodic.  Finally, we assumed in the original theorem that the Markov chain 
\begin{equation}\label{statMC2}
(X_0,X_1,\ldots,X_t),(X_t,X_{t+1},\ldots,X_{2t}),(X_{2t},X_{2t+1},\ldots,X_{3t}),\ldots
\end{equation}
is irreducible and aperiodic, and we claim that this assumption is also redundant.  The state space $\Sigma^*$ of \eqref{statMC2} is the set of $(x_0,x_1,\ldots,x_t)\in\Sigma^{t+1}$ such that 
$$
\bm\pi(x_0)\bm P_{C_1}(x_0,x_1)\bm P_{C_2}(x_1,x_2)\cdots\bm P_{C_t}(x_{t-1},x_t)>0,
$$
and its transition matrix $\bm Q$ is given by
\begin{align*}
&\bm Q((x_0,x_1,\ldots,x_t),(x_t,x_{t+1},\ldots,x_{2t}))\\
&\quad{}=\bm P_{C_1}(x_t,x_{t+1})\bm P_{C_2}(x_{t+1},x_{t+2})\cdots\bm P_{C_t}(x_{2t-1},x_{2t}).
\end{align*}
We use the fact that a necessary and sufficient condition for a finite Markov chain to be irreducible and aperiodic is that some power of its transition matrix has all entries positive.  It is straightforward to show that $\bm Q^n$ has all entries positive if $\bm P^{n-1}$ does.  Indeed, 
\begin{align}\label{Q^n}
&\bm Q^n((x_0,x_1,\ldots,x_t),(y_0,y_1,\ldots,y_t))\nonumber\\
&\quad{}=\bm P^{n-1}(x_t,y_0)\bm P_{C_1}(y_0,y_1)\bm P_{C_2}(y_1,y_2)\cdots\bm P_{C_t}(y_{t-1},y_t).
\end{align}
Because $\bm P$ is irreducible and aperiodic, so too is $\bm Q$.
\end{proof}

As an illustration, we can use \eqref{mean-formula} to confirm \eqref{ABB} and \eqref{ABABB}, in which case $\Sigma=\{0,1,2\}$,
$$
\bm P_A=\begin{pmatrix}0&1/2&1/2\\1/2&0&1/2\\1/2&1/2&0\end{pmatrix},\quad
\bm P_B=\begin{pmatrix}0&1/10&9/10\\1/4&0&3/4\\3/4&1/4&0\end{pmatrix},
$$
and the payoff matrix is
$$
\bm W=\begin{pmatrix}0&1&-1\\-1&0&1\\ 1&-1&0\end{pmatrix}.
$$

More generally, we wish to apply Theorem~\ref{SLLN} with 
\begin{equation}\label{Sigma}
\Sigma=\{0,1,\ldots,r-1\}
\end{equation}
($r$ is the modulo number in game $B$), the $r\times r$ transition matrices
\begin{equation}\label{PA}
{\bm P}_A=\begin{pmatrix}0&1/2&0&\cdots&0&0&1/2\\
1/2&0&1/2&\cdots&0&0&0\\
0&1/2&0&\cdots&0&0&0\\
\vdots&\vdots&\vdots& &\vdots&\vdots&\vdots\\
0&0&0&\cdots&0&1/2&0\\
0&0&0&\cdots&1/2&0&1/2\\
1/2&0&0&\cdots&0&1/2&0\end{pmatrix},
\end{equation}
\begin{equation}\label{PB}
{\bm P}_B=\begin{pmatrix}0&p_0&0&\cdots&0&0&1-p_0\\
1-p_1&0&p_1&\cdots&0&0&0\\
0&1-p_1&0&\cdots&0&0&0\\
\vdots&\vdots&\vdots& &\vdots&\vdots&\vdots\\
0&0&0&\cdots&0&p_1&0\\
0&0&0&\cdots&1-p_1&0&p_1\\
p_1&0&0&\cdots&0&1-p_1&0\end{pmatrix},
\end{equation}
where $p_0$ and $p_1$ are given by \eqref{rho-param}, and the $r\times r$ payoff matrix
\begin{equation}\label{W}
\bm W=\begin{pmatrix}0&1&0&\cdots&0&0&-1\\
-1&0&1&\cdots&0&0&0\\
0&-1&0&\cdots&0&0&0\\
\vdots&\vdots&\vdots& &\vdots&\vdots&\vdots\\
0&0&0&\cdots&0&1&0\\
0&0&0&\cdots&-1&0&1\\
1&0&0&\cdots&0&-1&0\end{pmatrix}.
\end{equation}
There are five cases that we want to consider.

\begin{enumerate}
\item Let the pattern $C_1C_2\cdots C_t$ of Theorem~\ref{SLLN} be arbitrary.  If $\rho>0$ and $r$ is odd ($\ge3$), then $\bm P:=\bm P_{C_1}\bm P_{C_2}\cdots\bm P_{C_t}$ is irreducible and aperiodic.

\item Let the pattern $C_1C_2\cdots C_t$ be arbitrary.  If $\rho>0$, $r$ is even ($\ge4$), and $t$ is odd, then $\bm P$ is irreducible and periodic with period 2.  

\item Let the pattern $C_1C_2\cdots C_t$ be arbitrary.  If $\rho>0$, $r$ is even ($\ge4$), and $t$ is even, then $\bm P$ is reducible with two aperiodic recurrent classes, each of size $r/2$.  

\item Let the pattern $C_1C_2\cdots C_t$ have the form $(AB)^s B^{r-2}$ for a positive integer $s$.  If $\rho=0$ and $r$ is odd ($\ge3$), then $\bm P:=(\bm P_A \bm P_B)^s(\bm P_B)^{r-2}$ is reducible with one aperiodic recurrent class of size 2 and $r-2$ transient states.

\item Let the pattern $C_1C_2\cdots C_t$ have the form $(AB)^s B^{r-2}$ for a positive integer $s$.  If $\rho=0$ and $r$ is even ($\ge4$), then $\bm P$ is reducible with two absorbing states and $r-2$ transient states.
\end{enumerate}

Theorem~\ref{SLLN} applies directly only to Case 1.  Nevertheless, the theorem can be extended so as to apply first to Cases 1 and 4, then to Cases 3 and 5, and finally to Case 2.  We begin by generalizing Theorem \ref{SLLN} so as to apply to Cases 1 and 4.

\begin{theorem3'}
Theorem~\ref{SLLN} holds with ``is irreducible and aperiodic'' replaced by ``has only one recurrent class, which is aperiodic''.
\end{theorem3'}

\begin{proof}
Assume that $\bm P$ has only one recurrent class, which is aperiodic.  Let $\Sigma_0\subset\Sigma$ be the unique recurrent class.  The stationary distribution $\bm\pi$ of $\bm P$ is unique and satisfies $\bm\pi(x)>0$ if $x\in\Sigma_0$ and $\bm\pi(x)=0$ otherwise.  For some $n\ge2$, $\bm P^{n-1}(x_t,y_0)>0$ for all $x_t,y_0\in\Sigma_0$.  With the help of \eqref{Q^n} we find that $\bm Q^n$ has all entries positive, hence $\bm Q$ is irreducible and aperiodic.  

An example may help to clarify this argument.  Consider the special case of \eqref{Sigma}--\eqref{W} (with \eqref{rho-param}) in which $\rho=0$ and $r=3$, and let $C_1C_2C_3=ABB$.  Then $\bm\pi=(2/3,0,1/3)$, and the state space for the Markov chain $(X_0,X_1,X_2,X_3)$, $(X_3,X_4,X_5,X_6)$, \dots\ is $\Sigma^*=\{(0,1,2,0), (0,2,0,2), (2,0,2,0), (2,1,2,0)\}$ with corresponding transition matrix
$$
\bm Q=\begin{pmatrix}1/2&1/2&0&0\\0&0&1/2&1/2\\1/2&1/2&0&0\\1/2&1/2&0&0\end{pmatrix},
$$
which is irreducible and aperiodic.

The remainder of the proof follows that of Theorem~6 of Ethier and Lee (2009).
\end{proof}

We turn to Cases 3 and 5, which require a new formulation of Theorem \ref{SLLN}, the difficulty being that the limit in the SLLN depends on the initial distribution of the underlying Markov chain.

\begin{theorem}\label{SLLN-2}
Let $\bm P_A$ and $\bm P_B$ be transition matrices for Markov chains in a finite state space $\Sigma$.  Let $C_1C_2\cdots C_t$, where each $C_i$ is $A$ or $B$, be a pattern of $A$s and $B$s of length $t$.  Assume that $\bm P:=\bm P_{C_1}\bm P_{C_2}\cdots\bm P_{C_t}$ is reducible with two recurrent classes $R_1$ and $R_2$, both of which are aperiodic, and possibly some transient states, and let the row vectors $\bm\pi_1$ and $\bm\pi_2$ be the unique stationary distributions of $\bm P$ concentrated on $R_1$ and $R_2$, respectively.  Given a real-valued function $w$ on $\Sigma\times\Sigma$,  define the payoff matrix $\bm W:=(w(i,j))_{i,j\in\Sigma}$.  Define $\dot{\bm P}_A:=\bm P_A\circ\bm W$ and $\dot{\bm P}_B:=\bm P_B\circ\bm W$, where $\circ$ denotes the Hadamard $($entrywise$)$ product, and put
\begin{equation*}
\mu_j:=t^{-1}\bm\pi_j(\dot{\bm P}_{C_1}+\bm P_{C_1}\dot{\bm P}_{C_2}+\cdots+\bm P_{C_1}\bm P_{C_2}\cdots\bm P_{C_{t-1}}\dot{\bm P}_{C_t})\bm1
\end{equation*}
for $j=1,2$, where $\bm1$ denotes a column vector of $1$s with entries indexed by $\Sigma$.  Let $\{X_n\}_{n\ge0}$ be a nonhomogeneous Markov chain in $\Sigma$ with transition matrices $\bm P_{C_1}$, $\bm P_{C_2}$, \dots, $\bm P_{C_t}$, $\bm P_{C_1}$, $\bm P_{C_2}$, \dots, $\bm P_{C_t}$, $\bm P_{C_1}$, and so on, and let its initial state be $i_0\in\Sigma$.  Let $\alpha:=P(X_{nt}\in R_1\text{ for }n\text{ sufficiently large})$.  For each $n\ge1$, define $\xi_n:=w(X_{n-1},X_n)$ and $S_n:=\xi_1+\cdots+\xi_n$.  Then $\lim_{n\to\infty}n^{-1}S_n=\alpha\mu_1+(1-\alpha)\mu_2$ a.s.
\end{theorem}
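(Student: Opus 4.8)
The plan is to pass to the induced (sampled) chain and then decompose the sample space according to which recurrent class eventually absorbs the chain. First I would form the Markov chain $\{Y_n\}_{n\ge0}$ with $Y_n := (X_{nt}, X_{nt+1}, \ldots, X_{(n+1)t})$ on the state space $\Sigma^*$, whose transition matrix $\bm Q$ is the one displayed in the proof of Theorem~\ref{SLLN}. Using the identity \eqref{Q^n}, the reducible structure of $\bm P$ lifts to $\bm Q$: since $\bm P$ has exactly the two recurrent classes $R_1, R_2$, both aperiodic, $\bm Q$ has exactly two recurrent classes $R_1^*, R_2^*$ (those admissible tuples $(x_0,\ldots,x_t)$ whose initial coordinate $x_0$ lies in $R_1$, resp.\ $R_2$), each aperiodic, together with transient states. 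Because $\Sigma$ is finite, $\{Y_n\}$ is absorbed a.s.\ into exactly one of $R_1^*, R_2^*$; writing $A_j := \{Y_n \in R_j^* \text{ eventually}\} = \{X_{nt} \in R_j \text{ eventually}\}$, we have $A_1 \cap A_2 = \emptyset$, $P(A_1 \cup A_2) = 1$, and $P(A_1) = \alpha$, $P(A_2) = 1-\alpha$.

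Next I would record the one-period payoff as a function on $\Sigma^*$: for $y = (x_0, x_1, \ldots, x_t)$ put $g(y) := \sum_{i=1}^t w(x_{i-1}, x_i)$, so that $S_{nt} = \sum_{m=1}^n g(Y_{m-1})$. The key step is the strong law for a finite Markov chain with two recurrent classes: on the absorption event $A_j$, the Ces\`aro averages $n^{-1}\sum_{m=1}^n g(Y_{m-1})$ converge a.s.\ to the stationary average of $g$ over $R_j^*$. This is standard once one observes that the pre-absorption excursion through the transient states is a.s.\ a finite prefix, hence does not affect the Ces\`aro limit, and that after absorption the restriction of $\{Y_n\}$ to $R_j^*$ is an irreducible (indeed aperiodic) finite chain to which the ordinary ergodic theorem applies. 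It then remains to identify the stationary average of $g$ over $R_j^*$ with $t\mu_j$; this is the same telescoping computation used for \eqref{mean-formula} in the proof of Theorem~6 of Ethier and Lee (2009), carried out with $\bm\pi_j$ in place of $\bm\pi$. This yields $n^{-1}S_{nt} \to \mu_j$ a.s.\ on $A_j$.

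Finally I would remove the sampling by interpolation. For $nt \le m < (n+1)t$ we have $|S_m - S_{nt}| \le t \max_{i,j}|w(i,j)|$, a fixed bound, so $m^{-1}S_m$ and $(nt)^{-1}S_{nt}$ share the same limit; hence $n^{-1}S_n \to \mu_j$ a.s.\ on $A_j$ for $j = 1, 2$. Since $A_1$ and $A_2$ partition the sample space up to a null set and carry probabilities $\alpha$ and $1-\alpha$, this establishes the asserted limit $\alpha\mu_1 + (1-\alpha)\mu_2$.

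I expect the main obstacle to be the second step, namely the passage from the ergodic theorem for a single irreducible class to the present two-class setting, and in particular the clean justification that conditioning on the absorption event $A_j$ (rather than changing the initial distribution) still licenses the stationary average $\mu_j$. The remaining ingredients, the lifting of the class structure through \eqref{Q^n}, the telescoping identification of $\mu_j$, and the interpolation, are routine adaptations of the arguments already in place for Theorem~\ref{SLLN} and Theorem 3$'$.
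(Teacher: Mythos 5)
Your proposal is correct and follows essentially the same route as the paper's own proof: the paper likewise reuses the Theorem~\ref{SLLN} machinery (the lifted chain on $\Sigma^*$ via \eqref{Q^n}, the telescoping identification of $\mu_j$, and interpolation) to conclude $n^{-1}S_n\to\mu_j$ a.s.\ first for initial distribution $\bm\pi_j$ and then for initial state in $R_j$, and it resolves the conditioning issue you flag by introducing the stopping time $N:=\min\{nt:X_{nt}\in R_1\cup R_2\}$ with $P(X_N\in R_1)=\alpha$ --- which is exactly your absorption-event decomposition, since $A_j=\{X_N\in R_j\}$ a.s.\ and the strong Markov property at the multiple-of-$t$ time $N$ restarts the periodic pattern cleanly. (One trivial slip: near the end of your second paragraph, $n^{-1}S_{nt}\to\mu_j$ should read $(nt)^{-1}S_{nt}\to\mu_j$, consistent with your final statement.)
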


\begin{proof}
The argument used to prove the conclusion of Theorem \ref{SLLN} when $\bm\pi$ is the initial distribution applies here, allowing us to prove that $\lim_{n\to\infty}n^{-1}S_n=\mu_j$ a.s.\ if $\bm\pi_j$ is the initial distribution, then if the initial state $i_0$ belongs to $R_j$, for $j=1,2$.  Let $N:=\min\{nt: X_{nt}\in R_1\cup R_2\}$.  Then $P(X_N\in R_1)=\alpha$, and the stated conclusion readily follows.
\end{proof}

We conclude this section by addressing Case 2.

\begin{theorem3''}
Theorem~\ref{SLLN} holds with ``is irreducible and aperiodic'' replaced by ``is irreducible and periodic with period 2''.
\end{theorem3''}

\begin{proof}
The idea is to apply Theorem \ref{SLLN-2} with the pattern $C_1C_2\cdots C_t$ replaced by the pattern $C_1C_2\cdots C_tC_1C_2\cdots C_t$, which has the same limit in the SLLN.  In particular, $\bm P$ is replaced by $\bm P^2$.  The assumption that $\bm P$ is irreducible with period 2 means that $\Sigma$ is the disjoint union of $R_1$ and $R_2$, and transitions under $\bm P$ take $R_1$ to $R_2$ and $R_2$ to $R_1$.  This means that $\bm P^2$ is reducible with two recurrent classes, $R_1$ and $R_2$, and no transient states.  Let the row vectors $\bm\pi_1$ and $\bm\pi_2$ be the unique stationary distributions of $\bm P^2$ concentrated on $R_1$ and $R_2$, respectively.  Then $\bm\pi_1\bm P=\bm\pi_2$ and $\bm\pi_2\bm P=\bm\pi_1$.  Consequently, the limit $\mu_1$ starting in $R_1$ is, according to Theorem \ref{SLLN-2}, 
\begin{align*}
&(2t)^{-1}\bm\pi_1\big[\dot{\bm P}_{C_1}+\bm P_{C_1}\dot{\bm P}_{C_2}+\cdots+\bm P_{C_1}\bm P_{C_2}\cdots\bm P_{C_{t-1}}\dot{\bm P}_{C_t}\\
&\qquad\qquad\;{}+\bm P(\dot{\bm P}_{C_1}+\bm P_{C_1}\dot{\bm P}_{C_2}+\cdots+\bm P_{C_1}\bm P_{C_2}\cdots\bm P_{C_{t-1}}\dot{\bm P}_{C_t})\big]\bm1\\
&\quad{}=t^{-1}\bm\pi(\dot{\bm P}_{C_1}+\bm P_{C_1}\dot{\bm P}_{C_2}+\cdots+\bm P_{C_1}\bm P_{C_2}\cdots\bm P_{C_{t-1}}\dot{\bm P}_{C_t})\bm1,
\end{align*}
where $\bm\pi:=(\bm\pi_1+\bm\pi_2)/2$ is the unique stationary distribution of $\bm P$, and this is \eqref{mean-formula}.  The limit $\mu_2$ starting in $R_2$ is the same but with $\bm\pi_1$ and $\bm\pi_2$ interchanged, and again this is \eqref{mean-formula}.
\end{proof}

For example, we find that
$$
\mu(4,\rho,ABB)=\frac{(1 - \rho)^3}{3 (1 + \rho^3)}
$$
as a consequence of Theorem $3''$, and
$$
\mu(4,\rho,ABBB)=\begin{cases}\cfrac{(1 - \rho)(2 - 3 \rho + 2 \rho^2)}{4 (1 + \rho) (1 - \rho + \rho^2)}&\text{if initial capital is even}\\ \noalign{\medskip}
-\cfrac{\rho^2(1 - \rho)(5-6\rho+5\rho^2)}{4(1 + \rho)^3 (1 - \rho + \rho^2)^2}&\text{if initial capital is odd}\end{cases}
$$
as a consequence of Theorem \ref{SLLN-2}.  Recalling the five cases below \eqref{Sigma}--\eqref{W}, these two examples correspond to Cases 2 and 3, respectively, whereas \eqref{mu(ABB) formula} corresponds to Case 1.

Finally, we point out that R\'emillard and Vaillancourt (2019) have addressed some of the same issues that we encountered in this section, namely reducibility, periodicity, and more than one recurrent class, albeit by different methods.

\section{Mean of a binomial-like distribution}\label{mean section}

Here we want to find the mean of a discrete distribution that depends, like the binomial, on two parameters, a positive integer $n$ and $p\in(0,1)$.  The distribution does not appear to have a name.  The formula for the probability mass function depends on whether $n$ is even or odd, so we treat the two cases separately.  We use the convention that $q:=1-p$. 

In the case $n=2m$ with $m$ a positive integer, consider a particle that starts at $(0,0)$.  At each time step, it moves one unit to the right with probability $p$ or one unit up with probability $q$, stopping at the first time it reaches the boundary $(k,m-\lfloor k/2\rfloor)$, $k=0,1,\ldots,2m$.  Let $Z_{2m}$ denote the $x$-coordinate of its final position.  Then
\begin{equation}\label{dist-even}
P(Z_{2m}=k)=\binom{m + \lfloor k/2\rfloor}{k}p^k q^{m - \lfloor k/2\rfloor},\quad k=0,1,\ldots,2m.
\end{equation}
Each lattice path ending at $(k,m - \lfloor k/2\rfloor)$ has probability $p^k q^{m - \lfloor k/2\rfloor}$, and
the binomial coefficient counts the number of paths that end at $(k,m-k/2)$ if $k$ is even, and at $(k,m-(k-1)/2)$ if $k$ is odd because in the latter case the path must first reach $(k,m-(k+1)/2)$.  See Figure~\ref{even-fig}.

In the case $n=2m-1$ with $m$ a positive integer, again consider a particle that starts at $(0,0)$.  At each time step, it moves one unit to the right with probability $p$ or one unit up with probability $q$, stopping at the first time it reaches the boundary $(k,m-\lceil k/2\rceil)$, $k=0,1,\ldots,2m-1$.  Let $Z_{2m-1}$ denote the $x$-coordinate of its final position.  Then
\begin{equation}\label{dist-odd}
P(Z_{2m-1}=k)=\binom{m - 1 + \lceil k/2\rceil}{k}p^k q^{m - \lceil k/2\rceil},\quad k=0,1,\ldots,2m-1.
\end{equation}
Each lattice path ending at $(k,m - \lceil k/2\rceil)$ has probability $p^k q^{m - \lceil k/2\rceil}$, and
the binomial coefficient counts the number of paths that end at $(k,m-(k+1)/2)$ if $k$ is odd, and at $(k,m-k/2)$ if $k$ is even because in the latter case the path must first reach $(k,m-1-k/2)$.  See Figure~\ref{odd-fig}.

\begin{figure}
\setlength{\unitlength}{1cm}
\begin{picture}(10.5,6.5)
\thinlines
\put(1,1){\line(1,0){9.5}}
\put(1,2){\line(1,0){9.5}}
\put(1,3){\line(1,0){7.5}}
\put(1,4){\line(1,0){5.5}}
\put(1,5){\line(1,0){3.5}}
\put(1,6){\line(1,0){1.5}}
\put(1,1){\line(0,1){5.5}}
\put(2,1){\line(0,1){5.5}}
\put(3,1){\line(0,1){4.5}}
\put(4,1){\line(0,1){4.5}}
\put(5,1){\line(0,1){3.5}}
\put(6,1){\line(0,1){3.5}}
\put(7,1){\line(0,1){2.5}}
\put(8,1){\line(0,1){2.5}}
\put(9,1){\line(0,1){1.5}}
\put(10,1){\line(0,1){1.5}}
\put(1,4){\circle*{0.2}}
\put(2,4){\circle*{0.2}}
\put(3,3){\circle*{0.2}}
\put(4,3){\circle*{0.2}}
\put(5,2){\circle*{0.2}}
\put(6,2){\circle*{0.2}}
\put(7,1){\circle*{0.2}}
\put(1,5){\circle{0.2}}
\put(2,5){\circle{0.2}}
\put(3,4){\circle{0.2}}
\put(4,4){\circle{0.2}}
\put(5,3){\circle{0.2}}
\put(6,3){\circle{0.2}}
\put(7,2){\circle{0.2}}
\put(8,2){\circle{0.2}}
\put(9,1){\circle{0.2}}
\put(0.9,0.5){0}
\put(1.9,0.5){1}
\put(2.9,0.5){2}
\put(3.9,0.5){3}
\put(4.9,0.5){4}
\put(5.9,0.5){5}
\put(6.9,0.5){6}
\put(7.9,0.5){7}
\put(8.9,0.5){8}
\put(9.9,0.5){9}
\put(0.5,0.9){0}
\put(0.5,1.9){1}
\put(0.5,2.9){2}
\put(0.5,3.9){3}
\put(0.5,4.9){4}
\put(0.5,5.9){5}
\end{picture}
\caption{\label{even-fig}The solid dots determine the boundary characterizing $Z_6$, whereas the open dots determine the boundary characterizing $Z_8$.}
\end{figure}
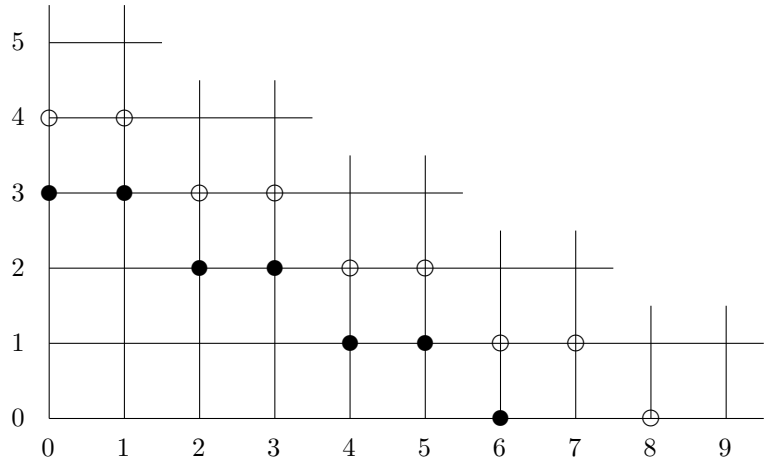

\begin{figure}
\setlength{\unitlength}{1cm}
\begin{picture}(10.5,6.5)
\thinlines
\put(1,1){\line(1,0){9.5}}
\put(1,2){\line(1,0){9.5}}
\put(1,3){\line(1,0){7.5}}
\put(1,4){\line(1,0){5.5}}
\put(1,5){\line(1,0){3.5}}
\put(1,6){\line(1,0){1.5}}
\put(1,1){\line(0,1){5.5}}
\put(2,1){\line(0,1){5.5}}
\put(3,1){\line(0,1){4.5}}
\put(4,1){\line(0,1){4.5}}
\put(5,1){\line(0,1){3.5}}
\put(6,1){\line(0,1){3.5}}
\put(7,1){\line(0,1){2.5}}
\put(8,1){\line(0,1){2.5}}
\put(9,1){\line(0,1){1.5}}
\put(10,1){\line(0,1){1.5}}
\put(1,4){\circle*{0.2}}
\put(2,3){\circle*{0.2}}
\put(3,3){\circle*{0.2}}
\put(4,2){\circle*{0.2}}
\put(5,2){\circle*{0.2}}
\put(6,1){\circle*{0.2}}
\put(1,5){\circle{0.2}}
\put(2,4){\circle{0.2}}
\put(3,4){\circle{0.2}}
\put(4,3){\circle{0.2}}
\put(5,3){\circle{0.2}}
\put(6,2){\circle{0.2}}
\put(7,2){\circle{0.2}}
\put(8,1){\circle{0.2}}
\put(0.9,0.5){0}
\put(1.9,0.5){1}
\put(2.9,0.5){2}
\put(3.9,0.5){3}
\put(4.9,0.5){4}
\put(5.9,0.5){5}
\put(6.9,0.5){6}
\put(7.9,0.5){7}
\put(8.9,0.5){8}
\put(9.9,0.5){9}
\put(0.5,0.9){0}
\put(0.5,1.9){1}
\put(0.5,2.9){2}
\put(0.5,3.9){3}
\put(0.5,4.9){4}
\put(0.5,5.9){5}
\end{picture}
\caption{\label{odd-fig}The solid dots determine the boundary characterizing $Z_5$, whereas the open dots determine the boundary characterizing $Z_7$.}
\end{figure}
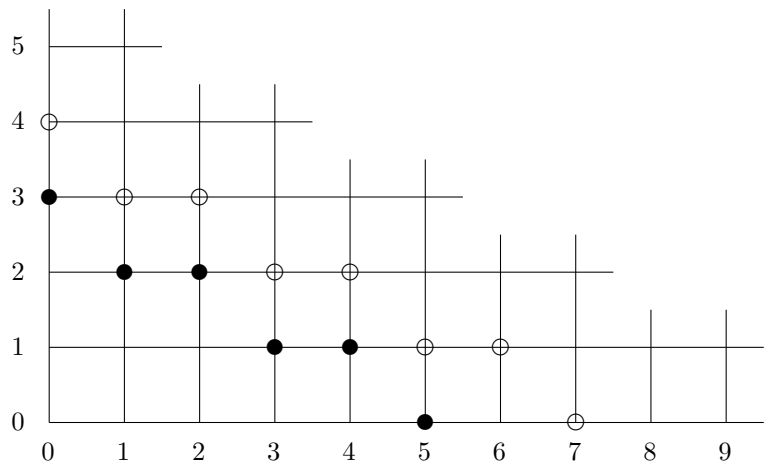

\begin{lemma}\label{even-lemma}
\begin{equation}\label{even}
P(Z_n\text{ is even})=\begin{cases}(1+q^{n+1})/(1+q)&\text{if $n$ is even},\\
(q+q^{n+1})/(1+q)&\text{if $n$ is odd}.\end{cases}
\end{equation}
Equivalently,
\begin{equation*}
P(Z_n\text{ is odd})=\begin{cases}(q-q^{n+1})/(1+q)&\text{if $n$ is even},\\
(1-q^{n+1})/(1+q)&\text{if $n$ is odd}.\end{cases}
\end{equation*}
\end{lemma}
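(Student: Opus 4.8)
The plan is to recast the stopping rule as a one-dimensional first-passage problem and then read off the parity of $Z_n$ from a two-state auxiliary chain whose transfer matrix can be diagonalized explicitly. Write $r$ and $u$ for the numbers of rightward and upward steps taken so far, and set $L:=u+\lfloor r/2\rfloor$ in the even case $n=2m$ and $L:=u+\lceil r/2\rceil$ in the odd case $n=2m-1$. The defining boundary $u=m-\lfloor r/2\rfloor$ (resp.\ $u=m-\lceil r/2\rceil$) is exactly $\{L=m\}$, so the particle stops at the first time $L$ reaches $m$, and $Z_n$ is even if and only if $r$ is even at that time. Every up-step raises $L$ by $1$; a right-step raises $L$ by $1$ or leaves it fixed according to the current parity of $r$ (in the even case it increases precisely when $r$ is odd, in the odd case precisely when $r$ is even). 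Hence the pair $(L,\ r\bmod 2)$ is a Markov chain that increases $L$ by at most $1$ per step, so $L=m$ is reached without overshoot.

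Next I would introduce, for each level $L\le m$, the two conditional probabilities $a_L,b_L$ that the walk eventually stops with $r$ even, given that it currently sits at level $L$ with $r$ even, respectively odd, and impose the terminal data $a_m=1$, $b_m=0$. Conditioning on the next step gives a backward linear recursion $(a_L,b_L)^\top=M\,(a_{L+1},b_{L+1})^\top$ with a constant $2\times2$ transfer matrix; in the even case $M=\begin{pmatrix} q+p^2 & pq\\ p & q\end{pmatrix}$, and in the odd case the matrix is this same $M$ conjugated by the swap $\begin{pmatrix}0&1\\1&0\end{pmatrix}$, so the two cases share a spectrum and differ only by interchanging the two states. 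Iterating from the terminal data yields $P(Z_n\text{ even})=a_0=(M^m)_{11}$ in the even case and the swapped entry $(M^m)_{22}$ in the odd case.

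The computational heart is to evaluate these entries of $M^m$. The characteristic polynomial of $M$ is $\lambda^2-(2q+p^2)\lambda+q^2$, whose discriminant is $p^2(4q+p^2)=p^2(2-p)^2$; this factorization is the one step I expect to matter, since it collapses the eigenvalues to the clean values $\lambda_+=1$ and $\lambda_-=q^2$. With distinct eigenvalues, Cayley--Hamilton gives $M^m=c_1 M+c_0 I$ with $c_1=(1-q^{2m})/(1-q^2)$ and $c_0=(q^{2m}-q^2)/(1-q^2)$, and extracting entries yields $(M^m)_{11}=(1+q^{2m+1})/(1+q)$ and $(M^m)_{22}=(q+q^{2m})/(1+q)$. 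Substituting $n=2m$ in the first and $n=2m-1$ in the second produces the two branches of the stated formula for $P(Z_n\text{ even})$, and the displayed expressions for $P(Z_n\text{ odd})$ follow at once by complementation.

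I expect the only real obstacle to be bookkeeping rather than depth: one must verify carefully that the parity-dependent effect of a right-step is correctly encoded in the transfer matrix, and that the floor/ceiling convention interchanges the even and odd cases in exactly the claimed way, after which the simplification $4q+p^2=(2-p)^2$ makes everything routine. An alternative route, summing the mass function directly (for instance $\sum_j\binom{m+j}{2j}p^{2j}q^{m-j}$ in the even case), would reach the same answer, but only after invoking the closed form for $\sum_j\binom{m+j}{2j}t^j$, which is itself governed by the roots $1$ and $q^2$; the Markov-chain formulation seems the cleaner path.
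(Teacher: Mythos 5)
Your proof is correct, and after checking the details I can confirm each step: the level variable $L=u+\lfloor r/2\rfloor$ (resp.\ $L=u+\lceil r/2\rceil$) does turn the stopping boundary into the event $\{L=m\}$, a right-step raises $L$ exactly when $r$ is odd (resp.\ even), there is no overshoot since increments are at most one, the backward recursion has transfer matrix $M=\begin{pmatrix} q+p^2 & pq\\ p & q\end{pmatrix}$ (with the swap-conjugate in the odd case), and $(M^m)_{11}=(1+q^{2m+1})/(1+q)$, $(M^m)_{22}=(q+q^{2m})/(1+q)$, which give the two stated branches. Your route is genuinely different from the paper's in how the recursion is obtained and closed, though the same two-state parity chain underlies both: the paper proves the lemma by induction on $n$ in steps of two, reading off from the nested boundaries (its Figures~\ref{even-fig} and \ref{odd-fig}) the one-step parity transitions $P(Z_{2m+2}\text{ even}\mid Z_{2m}\text{ even})=q+p^2$ and $P(Z_{2m+2}\text{ even}\mid Z_{2m}\text{ odd})=p$ (and $q$, $pq$ in the odd case) --- exactly the entries of your $M$ and of its swap-conjugate --- and then verifies the guessed closed form. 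So the paper couples $Z_n$ to $Z_{n+2}$ geometrically where you track parity across levels of a first-passage problem; the substantive difference is at the closing step, where the paper needs the answer in advance to run the induction, while your Cayley--Hamilton computation of $M^m$ derives it. One simplification you missed: the ``one step you expected to matter,'' the discriminant factorization $p^2(4q+p^2)=p^2(2-p)^2$, is automatic, because $M$ is row-stochastic (rows sum to $q+p^2+pq=q+p=1$ and $p+q=1$), so $\lambda_+=1$ for free and $\lambda_-=\det M=q^2$; the clean spectrum is structural, not a happy accident. In exchange for the extra bookkeeping of the level reformulation, your argument explains where the formula comes from, whereas the paper's induction is shorter once the formula is stated.
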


\begin{proof}
We give separate proofs for $n$ even and $n$ odd, both by induction.  To initialize, in the $n=1$ case, the probability mass function is $q$ at 0 and $p$ at 1, so \eqref{even} holds.  In the $n=2$ case, the probability mass function is $q$ at 0, $pq$ at 1, and $p^2$ at 2, so again \eqref{even} holds.

Now assume that \eqref{even} holds for $n=2m$.  We must show that it holds for $n=2m+2$.  By the interpretation of the distribution (see Figure~\ref{even-fig}),
\begin{align*}
P(Z_{2m+2}\text{ is even}\mid Z_{2m}\text{ is even})&=q+p^2=1-q+q^2,\\
P(Z_{2m+2}\text{ is even}\mid Z_{2m}\text{ is odd})&=p=1-q.
\end{align*}
We conclude that
\begin{align*}
P(Z_{2m+2}\text{ is even})&=P(Z_{2m}\text{ is even})P(Z_{2m+2}\text{ is even}\mid Z_{2m}\text{ is even})\\
&\qquad{}+P(Z_{2m}\text{ is odd})P(Z_{2m+2}\text{ is even}\mid Z_{2m}\text{ is odd})\\
&=\frac{1+q^{2m+1}}{1+q}\,(1-q+q^2)+\frac{q-q^{2m+1}}{1+q}\,(1-q)\\
&=\frac{1+q^{2m+3}}{1+q},
\end{align*}
proving the lemma when $n$ is even.

Now assume that \eqref{even} holds for $n=2m-1$.  We must show that it holds for $n=2m+1$.  By the interpretation of the distribution (see Figure~\ref{odd-fig}),
\begin{align*}
P(Z_{2m+1}\text{ is even}\mid Z_{2m-1}\text{ is even})&=q,\\
P(Z_{2m+1}\text{ is even}\mid Z_{2m-1}\text{ is odd})&=pq=q(1-q).
\end{align*}
We conclude that
\begin{align*}
P(Z_{2m+1}\text{ is even})&=P(Z_{2m-1}\text{ is even})P(Z_{2m+1}\text{ is even}\mid Z_{2m-1}\text{ is even})\\
&\qquad{}+P(Z_{2m-1}\text{ is odd})P(Z_{2m+1}\text{ is even}\mid Z_{2m-1}\text{ is odd})\\
&=\frac{q+q^{2m}}{1+q}\,q+\frac{1-q^{2m}}{1+q}\,q(1-q)\\
&=\frac{q+q^{2m+2}}{1+q},
\end{align*}
proving the lemma when $n$ is odd.
\end{proof}

\begin{lemma}\label{mean-lemma}
\begin{equation*}
E[Z_n]=n\,\frac{p}{2-p}+[1-(-1)^n(1-p)^n]\,\frac{p(1-p)}{(2-p)^2}.
\end{equation*}
Equivalently,
\begin{equation}\label{mean}
E[Z_n]=n\,\frac{1-q}{1+q}+[1-(-1)^n q^n]\,\frac{q(1-q)}{(1+q)^2}.
\end{equation}
\end{lemma}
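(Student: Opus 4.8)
The plan is to establish a two-step recursion in $m$ for $E[Z_n]$ and then confirm the closed form by induction, reusing the parity probabilities already computed in Lemma~\ref{even-lemma}. The key structural observation is that the boundary defining $Z_{n+2}$ is exactly the boundary defining $Z_n$ translated upward by one unit: in the even case $(k,m+1-\lfloor k/2\rfloor)$ versus $(k,m-\lfloor k/2\rfloor)$, and analogously $(k,m+1-\lceil k/2\rceil)$ versus $(k,m-\lceil k/2\rceil)$ in the odd case. This lets me couple the two stopped walks: run a single sequence of right/up moves, let it stop on the lower ($Z_n$) boundary at $x$-coordinate $Z_n$, and then continue until it first meets the upper ($Z_{n+2}$) boundary, recording the displacement $Z_{n+2}-Z_n$.

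First I would determine the conditional law of this displacement by a short local computation at the stopping site, which is the main step. For $n=2m$, if $Z_{2m}$ is even the increment takes the values $0,1,2$ with probabilities $q,pq,p^2$, hence has mean $p(1+p)$, while if $Z_{2m}$ is odd the increment takes the values $0,1$ with probabilities $q,p$, hence has mean $p$; for $n=2m-1$ the two parities swap, giving mean $p$ when $Z_{2m-1}$ is even and mean $p(1+p)$ when it is odd. (The implied parity splits reproduce the conditional probabilities $q+p^2$, $p$, $q$, $pq$ appearing in the proof of Lemma~\ref{even-lemma}, a convenient consistency check.) Averaging over the parity of $Z_n$ and inserting $P(Z_{2m}\text{ even})=(1+q^{2m+1})/(1+q)$ and $P(Z_{2m-1}\text{ even})=(q+q^{2m})/(1+q)$ from Lemma~\ref{even-lemma}, I obtain
\begin{align*}
E[Z_{2m+2}]-E[Z_{2m}]&=p+p^2\,\frac{1+q^{2m+1}}{1+q},\\
E[Z_{2m+1}]-E[Z_{2m-1}]&=p(1+p)-p^2\,\frac{q+q^{2m}}{1+q}.
\end{align*}

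Finally I would verify by induction, separately in the even and odd cases, that the proposed formula \eqref{mean} satisfies these recursions. The base cases $n=2$ (giving $E[Z_2]=p(1+p)$) and $n=1$ (giving $E[Z_1]=p$) follow at once from the probability mass functions \eqref{dist-even} and \eqref{dist-odd}, and the inductive step is a routine simplification using $1-q^2=p(1+q)$ and $p+q=1$; for example, in the even case both the difference of consecutive values of \eqref{mean} and the right-hand side of the first recursion collapse to $(2p+p^2q^{2m+1})/(1+q)$. The only genuine obstacle is the local geometric bookkeeping in the first step—correctly resolving the corner cases of the translated boundary for each parity of $Z_n$—after which the remainder is elementary algebra.
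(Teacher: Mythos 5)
Your proposal is correct and is essentially the paper's own proof: the paper likewise inducts in steps of two on the parity classes of $n$, conditioning on the parity of $Z_n$ with exactly your increment means ($p$ and $pq+2p^2=p(1+p)$) and the parity probabilities from Lemma~\ref{even-lemma}, with the same base cases $E[Z_1]=p$ and $E[Z_2]=p(1+p)$. Your reformulation---first writing the two-step recursions explicitly and then checking that \eqref{mean} satisfies them---is only a cosmetic reorganization of the same induction, and your local boundary-shift computation is the same coupling the paper uses implicitly via Figures~\ref{even-fig} and \ref{odd-fig}.
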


\begin{proof}
As with Lemma~\ref{even-lemma}, we give separate proofs for $n$ even and $n$ odd, both by induction.  To initialize, in the $n=1$ case, the probability mass function is $q$ at 0 and $p$ at 1, so the mean is $p=1-q$ and \eqref{mean} holds.  In the $n=2$ case, the probability mass function is $q$ at 0, $pq$ at 1, and $p^2$ at 2, so the mean is $pq+2p^2=(1-q)(2-q)$ and again \eqref{mean} holds.

Now assume that \eqref{mean} holds for $n=2m$.  We must show that it holds for $n=2m+2$.  By the interpretation of the distribution (see Figure~\ref{even-fig}),
\begin{align*}
E[Z_{2m+2}-Z_{2m}\mid Z_{2m}\text{ is even}]&=pq+2p^2=(1-q)(2-q),\\
E[Z_{2m+2}-Z_{2m}\mid Z_{2m}\text{ is odd}]&=p=1-q.
\end{align*}
We conclude from the induction hypothesis and Lemma~\ref{even-lemma} that
\begin{align*}
E[Z_{2m+2}]&=E[Z_{2m}]+P(Z_{2m}\text{ is even})E[Z_{2m+2}-Z_{2m}\mid Z_{2m}\text{ is even}]\\
&\qquad\qquad\quad{}+P(Z_{2m}\text{ is odd})E[Z_{2m+2}-Z_{2m}\mid Z_{2m}\text{ is odd}]\\
&=2m\,\frac{1-q}{1+q}+(1-q^{2m})\,\frac{q(1-q)}{(1+q)^2}\\
&\qquad{}+\frac{1+q^{2m+1}}{1+q}\,(1-q)(2-q)+\frac{q-q^{2m+1}}{1+q}\,(1-q)\\
&=(2m+2)\,\frac{1-q}{1+q}+(1-q^{2m+2})\,\frac{q(1-q)}{(1+q)^2},
\end{align*}
proving the lemma when $n$ is even.

Now assume that \eqref{mean} holds for $n=2m-1$.  We must show that it holds for $n=2m+1$.  By the interpretation of the distribution (see Figure~\ref{odd-fig}),
\begin{align*}
E[Z_{2m+1}-Z_{2m-1}\mid Z_{2m-1}\text{ is even}]&=p=1-q,\\
E[Z_{2m+1}-Z_{2m-1}\mid Z_{2m-1}\text{ is odd}]&=pq+2p^2=(1-q)(2-q).
\end{align*}
We conclude from the induction hypothesis and Lemma~\ref{even-lemma} that
\begin{align*}
E[Z_{2m+1}]&=E[Z_{2m-1}]\\
&\qquad{}+P(Z_{2m-1}\text{ is even})E[Z_{2m+1}-Z_{2m-1}\mid Z_{2m-1}\text{ is even}]\\
&\qquad{}+P(Z_{2m-1}\text{ is odd})E[Z_{2m+1}-Z_{2m-1}\mid Z_{2m-1}\text{ is odd}]\\
&=(2m-1)\,\frac{1-q}{1+q}+(1+q^{2m-1})\,\frac{q(1-q)}{(1+q)^2}\\
&\qquad{}+\frac{q+q^{2m}}{1+q}\,(1-q)+\frac{1-q^{2m}}{1+q}\,(1-q)(2-q)\\
&=(2m+1)\,\frac{1-q}{1+q}+(1+q^{2m+1})\,\frac{q(1-q)}{(1+q)^2},
\end{align*}
proving the lemma when $n$ is odd.
\end{proof}

We conclude this section with alternative interpretations of the distribution of $Z_n$, given by \eqref{dist-even} if $n=2m$ and by \eqref{dist-odd} if $n=2m-1$, that do not require separate formulations for $n$ even and $n$ odd.  
\begin{itemize}

\item Consider a particle that starts at $(0,0)$.  At each time step, it moves one unit to the right with probability $p$ or one unit up with probability $q$, stopping at the first time it reaches or crosses the boundary $(k,(n-k)/2)$, $k=0,1,\ldots,n$.  Let $Z_n$ denote the $x$-coordinate of its final position.

\item Consider a particle that starts at $(0,0)$.  At each time step, it moves one unit to the right with probability $p$ or two units up with probability $q$, stopping at the first time it reaches or crosses the boundary $(k,n-k)$, $k=0,1,\ldots,n$.  Let $Z_n$ denote the $x$-coordinate of its final position.

\item Consider a particle that starts at $(0,0)$.  At each time step, it moves one unit to the right with probability $p$ or one unit up with probability $q$ followed by another unit up with probability 1, stopping at the first time it reaches the boundary $(k,n-k)$, $k=0,1,\ldots,n$.  Let $Z_n$ denote the $x$-coordinate of its final position.
\end{itemize}

The last of these interpretations is the context in which the distribution arises in Section~\ref{rate section} below.

\section{Proofs of Theorems \ref{sup=1} and \ref{rate}}\label{rate section}

\begin{proof}[Proof of Theorem \ref{sup=1}]
The result is immediate from Theorem~\ref{rate} provided we can show that $f(\rho):=\mu(r,\rho,(AB)^s B^{r-2})$ is continuous at 0.  We use Theorem~\ref{SLLN}, $3''$, or \ref{SLLN-2} to evaluate $f(\rho)$, which is a rational function of $\rho$.  The only potential singularities are those of the stationary distribution $\bm\pi$ (or $\bm\pi_1$ or $\bm\pi_2$).  But the existence and uniqueness of $\bm\pi$ (or $\bm\pi_1$ or $\bm\pi_2$) for $0\le\rho\le1$ ensures that $f(\rho)$ is real analytic there, hence continuous.
\end{proof}

We give two proofs of Theorem~\ref{rate}, the first one direct (depending solely on Theorems~$3'$ and \ref{SLLN-2}) but complicated, and the second one more easily understood but depending on Theorems~$3'$ and \ref{SLLN-2} and Lemmas~\ref{even-lemma} and \ref{mean-lemma}.

\begin{proof}[First proof of Theorem~\ref{rate}]
First assume that $r\ge3$ is odd.
Since $\dot{\bm P}_A \bm 1=\bm 0$, Theorem~$3'$ tells us that the rate of profit, regardless of initial capital, can be expressed as
\begin{align}\label{mean profit}
&\mu(r,0,(AB)^sB^{r-2})\nonumber\\
&\quad{}=(2s+r-2)^{-1} \bm \pi  \bigg[ \sum_{j=0}^{s-1}(\bm P_A \bm P_B)^j\bm P_A \dot{\bm P}_B +  \sum_{i=0}^{r-3}(\bm P_A \bm P_B)^s (\bm P_B)^i\dot{\bm P}_B \bigg]\bm 1,
\end{align}
where $\bm \pi$ is the stationary distribution of $\bm P:=(\bm P_A \bm P_B)^s ({\bm P}_B)^{r-2}$. Since $\rho=0$ and $r$ is odd, $\bm P$ is reducible with one recurrent class $\{0, r-1\}$ and $r-2$ transient states. From the observations about the pattern $(AB)^sB^{r-2}$ in Section~\ref{intro} it follows that $\bm P(0,0)=1-\bm P(0,r-1)=1-2^{-s}$ and $\bm P(r-1,0)=1$ so that the stationary distribution $\bm \pi$ is given by $\bm\pi=(\pi_0, 0, 0, \ldots, 0, \pi_{r-1})$, where 
\begin{align*}
\pi_0 = 1- \pi_{r-1}= \frac{2^s}{2^s+1}.
\end{align*}

Except for the factor $(2s+r-2)^{-1}$, all of the terms in \eqref{mean profit} have the form $\bm\pi\bm\Lambda\dot{\bm P}_B\bm1$ for a transition matrix $\bm \Lambda=(\lambda_{i,j})_{i,j=0,1,\ldots,r-1}$. Therefore, using $\dot{\bm P}_B\bm1=(-1,1,1,\ldots,1)^\textsf{T}$, we have
\begin{equation}\label{term}
\bm\pi\bm\Lambda\dot{\bm P}_B\bm1=1-2(\pi_0 \lambda_{0,0}+\pi_{r-1}\lambda_{r-1,0}), 
\end{equation}
showing that we need only determine two of the entries of $\bm\Lambda$ to evaluate \eqref{term}.

We first consider the transition matrix $\bm\Lambda=(\bm P_A \bm P_B)^j\bm P_A$ for $0\leq j\leq s-1$.
When $j<(r-1)/2$, we have $\lambda_{0,0}=0$. When $j\geq(r-1)/2$, from state 0 we can reach state $r-1$ after $j$ plays of $AB$ if there are at least $(r-1)/2$ wins from the $j$ plays of game $A$, after which we can move to state 0 with an additional win from game $A$.  Thus, we have
\begin{align*}
\lambda_{0,0}=\sum_{k=(r-1)/2}^j \binom{j}{k}\frac{1}{2^{j+1}}.
\end{align*}
For all $j$, we have $\lambda_{r-1,0}=1/2.$ Using \eqref{term}, for $j<(r-1)/2$,
\begin{align*}
\bm \pi (\bm P_A \bm P_B)^j\bm P_A \dot{\bm P}_B \bm 1=1-2\pi_{r-1}\,\frac12 = \pi_0 = \frac{2^s}{2^s+1},
\end{align*}
and for $j\geq(r-1)/2$, 
\begin{align*}
\bm \pi (\bm P_A \bm P_B)^j \bm P_A \dot{\bm P}_B \bm 1 &= 1-2\bigg[\pi_0 \sum_{k=(r-1)/2}^j \binom{j}{k}\frac{1}{2^{j+1}}  + \pi_{r-1}\,\frac12\bigg] \\
&= \frac{2^s}{2^s+1}\bigg[1-\sum_{k=(r-1)/2}^j \binom{j}{k}\frac{1}{2^j}\bigg].
\end{align*}
Summing these $s$ terms, we have
\begin{align}
\label{first}
\sum_{j=0}^{s-1}\bm \pi (\bm P_A \bm P_B)^j\bm P_A \dot{\bm P}_B \bm 1=
\frac{2^s}{2^s+1}\bigg[s - \sum_{j=(r-1)/2}^{s-1}\;\sum_{k=(r-1)/2}^j \binom{j}{k} \frac{1}{2^j}\bigg].
\end{align}

Next we consider the transition matrix $\bm\Lambda=(\bm P_A \bm P_B)^s(\bm P_B)^i$ for $0\leq i\leq r-3$.
For even $i$, we have $\lambda_{0,0}= 2^{-s}$ and $\lambda_{r-1,0}=0$, from which we obtain, via \eqref{term},
\begin{align*}
\bm \pi (\bm P_A \bm P_B)^s(\bm P_B)^i \dot{\bm P}_B \bm 1=1-2\pi_0\,2^{-s} = \frac{2^s -1}{2^s+1}.
\end{align*}
Now let $i$ be odd.  Assume we start from state 0. With at least $(r-i)/2$ wins from $s$ plays of game $A$, we can reach state $r-i$ or an even state to its right after $s$ plays of game $AB$, and then move to state 0 after $i$ additional plays of game $B$.  Thus, we have
\begin{align*}
\lambda_{0,0}=\sum_{k=(r-i)/2}^s \binom{s}{k}\frac{1}{2^s}.
\end{align*}
Moreover, $\lambda_{r-1, 0}= 1$.  Thus, for odd $i$ we obtain, via \eqref{term},
\begin{align*}
\bm \pi (\bm P_A \bm P_B)^s(\bm P_B)^i \dot{\bm P}_B \bm 1&=1-2\bigg[\pi_0 \sum_{k=(r-i)/2}^s \binom{s}{k} \frac{1}{2^s}+\pi_{r-1}\bigg] \\
&=\frac{2^s-1}{2^s+1}-\frac{2}{2^s+1}\sum_{k=(r-i)/2}^s \binom{s}{k}.
\end{align*}
Summing over $i$, we have
\begin{equation}\label{second}
\quad \sum_{i=0}^{r-3}\bm \pi (\bm P_A \bm P_B)^s(\bm P_B)^i \dot{\bm P}_B \bm 1=
(r-2)\frac{2^s-1}{2^s+1}-\frac{2}{2^s+1}\sum_{i=1}^{(r-3)/2}\!\!\!\sum_{k=(r-2i+1)/2}^s \binom{s}{k}. \quad
\end{equation}

For the double sum in \eqref{second}, a change of variables gives
\begin{equation*}
\sum_{i=1}^{(r-3)/2}\sum_{k=(r-2i+1)/2}^s \binom{s}{k}=\sum_{j=2}^{(r-1)/2}\sum_{k=j}^s \binom{s}{k}.
\end{equation*}
There are two cases.  If $(r-1)/2\ge s$, which also makes the double sum in \eqref{first} zero, then this becomes
\begin{align}\label{identity}
\sum_{j=2}^s\sum_{k=j}^s \binom{s}{k}&=\sum_{k=2}^s\sum_{j=2}^k\binom{s}{k}=\sum_{k=2}^s(k-1)\binom{s}{k}=\sum_{k=0}^s(k-1)\binom{s}{k}+1\nonumber\\
&=s2^{s-1}-2^s+1=\frac{s2^s-2(2^s-1)}{2},
\end{align}
and \eqref{mean profit} becomes
\begin{align*}
\mu(r,0,(AB)^sB^{r-2})&=\frac{1}{2s+r-2}\bigg[\frac{s2^s}{2^s+1}+(r-2)\frac{2^s-1}{2^s+1}-\frac{s2^s-2(2^s-1)}{2^s+1}\bigg]\nonumber\\
&=\frac{r}{2s+r-2}\,\frac{2^s-1}{2^s+1}.
\end{align*}

If $(r-1)/2<s$, it suffices to verify the following identity:
\begin{align*}
\sum_{j=(r-1)/2}^{s-1}\sum_{k=(r-1)/2}^j\binom{j}{k} 2^{s-j}+2\sum_{j=2}^{(r-1)/2}\sum_{k=j}^s\binom{s}{k}=s2^s-2(2^s-1).
\end{align*}
For $1\le s_0<s$, 
\begin{align*}
&\sum_{j=s_0}^{s-1}\sum_{k=s_0}^j\binom{j}{k}2^{s-j}+2\sum_{j=2}^{s_0}\sum_{k=j}^s\binom{s}{k}-[s2^s-2(2^s-1)]\\
&\qquad{}=\sum_{j=s_0}^{s-1}\sum_{k=s_0}^j\binom{j}{k}2^{s-j}-2\sum_{j=s_0+1}^{s}\sum_{k=j}^s\binom{s}{k}\\
&\qquad{}=\sum_{k=s_0}^{s-1}\sum_{j=k}^{s-1}\binom{j}{k}2^{s-j}-2\sum_{k=s_0+1}^s\sum_{j=k}^s\binom{s}{j}\\
&\qquad{}=\sum_{k=s_0+1}^{s} 2^{s+1}\bigg[\sum_{j=k-1}^{s-1}\binom{j}{k-1}\frac{1}{2^{j+1}}-\sum_{j=k}^s\binom{s}{j}\frac{1}{2^s}\bigg]\\
&\qquad{}=0,
\end{align*}
where the first equality uses \eqref{identity} and the last equality uses the relationship between the binomial and negative binomial distributions.  (The first sum within brackets is the probability that, in a sequence of independent Bernoulli trials with success probability $1/2$, at most $s$ trials are needed for the $k$th success, and the second sum is the probability that at least $k$ successes occur in $s$ trials.)

Next assume that $r\ge4$ is even.  Theorem~\ref{SLLN-2} tells us that the rate of profit can be expressed as
\begin{align}\label{mean profit 2}
&\mu(r,0,(AB)^sB^{r-2})\nonumber\\
&\quad{}=(2s+r-2)^{-1} \bm \pi_0\bigg[ \sum_{j=0}^{s-1}(\bm P_A \bm P_B)^j\bm P_A \dot{\bm P}_B +  \sum_{i=0}^{r-3}(\bm P_A \bm P_B)^s (\bm P_B)^i\dot{\bm P}_B \bigg]\bm 1,
\end{align}
where $\bm\pi_0:=(1,0,0,\ldots,0)$ if initial capital is even and $\bm\pi_0:=(0,0,\ldots,0,1)$ if initial capital is odd.

Except for the factor $(2s+r-2)^{-1}$, all of the terms in \eqref{mean profit 2} have the form $\bm\pi_0\bm\Lambda\dot{\bm P}_B\bm1$ for a transition matrix $\bm \Lambda=(\lambda_{i,j})_{i,j=0,1,\ldots,r-1}$, and
\begin{equation*}
\bm\pi_0\bm\Lambda\dot{\bm P}_B\bm1=\begin{cases}1-2\lambda_{0,0}&\text{if initial capital is even},\\1-2\lambda_{r-1,0}&\text{if initial capital is odd}.\end{cases}
\end{equation*} 

For $\bm\Lambda=(\bm P_A\bm P_B)^j\bm P_A$ with $0\le j\le s-1$, $\lambda_{0,0}=0$ and $\lambda_{r-1,0}=1/2$.  For $\bm\Lambda=(\bm P_A\bm P_B)^s(\bm P_B)^i$ with $0\le i\le r-3$, $\lambda_{0,0}=0$ if $i$ is odd and
$$
\lambda_{0,0}=\bigg[\binom{s}{0}+\sum_{m=1}^{\lceil 2s/r\rceil}\sum_{k=(mr-i)/2}^{mr/2}\binom{s}{k}\bigg]\frac{1}{2^s}
$$
if $i$ is even.  Finally, $\lambda_{r-1,0}=1$ if $i$ is odd and $\lambda_{r-1,0}=0$ if $i$ is even.

Therefore, if initial capital is odd,
$$
\mu(r,0,(AB)^s B^{r-2})=\frac{1}{2s+r-2}\bigg[s\bigg(1-2\cdot\frac12\bigg)+\frac{r-2}{2}\,(1-1)\bigg]=0,
$$
and if initial capital is even,
\begin{align*}
&\mu(r,0,(AB)^s B^{r-2})\\
&\qquad{}=\frac{1}{2s+r-2}\bigg\{s+r-2-2\sum_{i=0}^{r/2-2}\bigg[\binom{s}{0}+\sum_{m=1}^{\lceil 2s/r\rceil}\;\sum_{k=mr/2-i}^{mr/2}\binom{s}{k}\bigg]\frac{1}{2^s}\bigg\}.
\end{align*}
It remains to check that this last expression coincides with the formula in \eqref{formula2}.  The quantity within braces is equal to
\begin{align*}
&s+r-2-2\bigg(\frac{r}{2}-1\bigg)\frac{1}{2^s}-2\sum_{m=1}^{\lceil 2s/r\rceil}\sum_{j=(m-1)r/2+2}^{mr/2}\sum_{k=j}^{mr/2}\binom{s}{k}\frac{1}{2^s}\\
&\quad{}=s+(r-2)\bigg(1-\frac{1}{2^s}\bigg)-2\sum_{m=1}^{\lceil 2s/r\rceil}\sum_{k=(m-1)r/2+2}^{mr/2}(k-1-(m-1)r/2)\binom{s}{k}\frac{1}{2^s}\\
&\quad{}=s+(r-2)\bigg(1-\frac{1}{2^s}\bigg)-2\sum_{m=1}^{\lceil 2s/r\rceil}\sum_{k=(m-1)r/2+1}^{mr/2}(k-1-(m-1)r/2)\binom{s}{k}\frac{1}{2^s}\\
&\quad{}=s+(r-2)\bigg(1-\frac{1}{2^s}\bigg)-2\sum_{k=0}^s(k-1)\binom{s}{k}\frac{1}{2^s}-\frac{2}{2^s}\\
&\qquad\quad{}+r\sum_{m=1}^{\lceil 2s/r\rceil}(m-1)\sum_{k=(m-1)r/2+1}^{mr/2}\binom{s}{k}\frac{1}{2^s}\\
&\quad{}=s+(r-2)\bigg(1-\frac{1}{2^s}\bigg)-2\bigg(\frac{s}{2}-1\bigg)-\frac{2}{2^s}\\
&\qquad\quad{}-r\bigg(1-\frac{1}{2^s}\bigg)+r\sum_{m=1}^{\lceil 2s/r\rceil}m\sum_{k=(m-1)r/2+1}^{mr/2}\binom{s}{k}\frac{1}{2^s}\\
&\quad{}=r\sum_{m=1}^{\lceil 2s/r\rceil}m\sum_{k=(m-1)r/2+1}^{mr/2}\binom{s}{k}\frac{1}{2^s}\\
&\quad{}=r\,\sum_{k=0}^s\bigg\lceil\frac{2k}{r}\bigg\rceil\binom{s}{k}\frac{1}{2^s},
\end{align*}
and the proof is complete.
\end{proof}

\begin{proof}[Second proof of Theorem~\ref{rate}]
First, fix an odd integer $r\ge3$ and a positive integer $s$.
We apply Theorem~$3'$ assuming \eqref{Sigma}--\eqref{W} with $\rho=0$ in \eqref{rho-param} and $C_1C_2\cdots C_t=(AB)^s B^{r-2}$ with $t:=2s+r-2$, to conclude that
\begin{equation}\label{limit}
\mu(r,0,(AB)^s B^{r-2})=\lim_{n\to\infty}(nt)^{-1}E[S_{nt}].
\end{equation}
(The theorem tells us that the rate of profit does not depend on initial capital, so for convenience we take initial capital congruent to 0 (mod $r$).)  Here $S_1,S_2,\ldots$ is the player's sequence of cumulative profits.  We can evaluate $E[S_{nt}]$.

We denote by $p_n(k)$, $k=0,1,\ldots,n$, the probability mass function in \eqref{dist-even} if $n=2m$ and in \eqref{dist-odd} if $n=2m-1$.  We claim that
$$
P(S_{nt}=kr-\text{mod}(n-k,2))=p_n(k),\quad k=0,1,\ldots,n,
$$
with $p=1-2^{-s}$.  The result follows by using the third of the alternative interpretations of the distribution in \eqref{dist-even} and \eqref{dist-odd} at the end of Section~\ref{mean section}. 

We can now evaluate, with the help of Lemmas~\ref{even-lemma} and \ref{mean-lemma}, mean profit after $nt$ games:
\begin{align*}
E[S_{nt}]&=\sum_{k=0}^n (kr-\text{mod}(n-k,2))p_n(k)\\
&=rE[Z_n]-P(n-Z_n\text{ is odd})\\
&=r\bigg(n\,\frac{1-q}{1+q}+[1-(-1)^n q^n]\,\frac{q(1-q)}{(1+q)^2}\bigg)-\frac{q-(-1)^n q^{n+1}}{1+q}.
\end{align*}
We divide by $nt=n(2s+r-2)$ and let $n\to\infty$ to obtain
$$
\lim_{n\to\infty}(nt)^{-1}E[S_{nt}]=\frac{r}{2s+r-2}\,\frac{1-q}{1+q}=\frac{r}{2s+r-2}\,\frac{2^s-1}{2^s+1},
$$
so \eqref{formula1} follows from this and \eqref{limit}.

Second, fix an even integer $r\ge4$ and a positive integer $s$.  We apply Theorem~\ref{SLLN-2} assuming \eqref{Sigma}--\eqref{W} with $\rho=0$ in \eqref{rho-param} and $C_1C_2\cdots C_t=(AB)^s B^{r-2}$ with $t:=2s+r-2$, to conclude that \eqref{limit} holds.
(The theorem tells us that the rate of profit depends on initial capital only through its parity, so for convenience we take initial capital congruent to 0 (mod $r$) if initial capital is even, or congruent to $r-1$ (mod $r$) if odd.)  Recalling from Section \ref{intro} that, with initial capital congruent to 0 (mod $r$), each play of $(AB)^s B^{r-2}$ results in a mean profit of 
\begin{align*}
E[S_t]=\sum_{m=1}^{\lceil 2s/r\rceil}mr\sum_{k=(m-1)r/2+1}^{mr/2}\binom{s}{k}\frac{1}{2^s}=r\,\sum_{k=0}^s\bigg\lceil\frac{2k}{r}\bigg\rceil\binom{s}{k}\frac{1}{2^s},
\end{align*}
we find that
$$
\lim_{n\to\infty}(nt)^{-1}E[S_{nt}]=\frac{r}{2s+r-2}\,\sum_{k=0}^s\bigg\lceil\frac{2k}{r}\bigg\rceil\binom{s}{k}\frac{1}{2^s}.
$$
With initial capital congruent to $r-1$ (mod $r$), $P(S_{nt}=0)=1$, so 
$$
\lim_{n\to\infty}(nt)^{-1}E[S_{nt}]=0,
$$
and \eqref{formula2} follows from the last two limits and \eqref{limit}.
\end{proof}

\section*{Acknowledgments}

We are grateful to Derek Abbott for raising the question addressed here and to Ira Gessel for suggesting the lattice path interpretation of the distribution defined by \eqref{dist-even} and \eqref{dist-odd}.

\end{document}